\documentclass[12pt]{amsart}
\usepackage{amsfonts,amssymb,amscd,xy}
\usepackage[leqno]{amsmath}
\usepackage{mathrsfs}
\usepackage{amssymb}
\usepackage{amsmath}
\usepackage{amsxtra}
\usepackage{amsthm}
\usepackage{hyperref}

\newcommand{\Hom}{{\mathrm{Hom}}}

\newcommand\s{\mathscr }

\def\sp{S^{\e\prime}}
\def\spp{S^{\e\prime\prime}}
\def\sppp{S^{\e\prime\prime\prime}}

\def\g{\varGamma}

\def\aut{{\rm Aut}}

\def\xpp{X^{\le\prime\prime}}
\def\xppp{X^{\le\prime\prime\prime}}

\def\yp{Y^{\e\prime}}

\DeclareMathAlphabet{\mathbbmsl}{U}{bbm}{m}{sl}

\newcommand{\isoto}{\overset{\!\sim}{\to}}

\usepackage{tabularx}
\usepackage{epsfig,amssymb}
\usepackage{bm} 
\usepackage{verbatim} 

\usepackage{hyperref}
\usepackage[color,notcite,notref]{showkeys}

\usepackage{verbatim} 

\textwidth=15 cm
\oddsidemargin=-1cm
\evensidemargin=-1cm
\makeatletter

\DeclareMathAlphabet{\mathcalligra}{T1}{calligra}{m}{n}

\numberwithin{equation}{section}

\def\be{\kern -.1em}
\def\le{\kern 0.03em}
\def\lle{\kern 0.015em}
\def\lbe{\kern -.025em}

\newcommand{\spec}{\mathrm{ Spec}\,}

\def\e{\kern 0.08em}

\newtheorem{lemma}{Lemma}[section]

\newtheorem{proposition-definition}[lemma]{Proposition-Definition}
\newtheorem{corollary}[lemma]{Corollary}
\newtheorem{proposition}[lemma]{Proposition}
\theoremstyle{definition}

\theoremstyle{remark}
\newtheorem{remark}[lemma]{Remark}

\newtheorem{example}[lemma]{Example}

\begin{document}
	
\input xy     
\xyoption{all}

\title[Remarks on descent data]{Some remarks on descent data with applications to Galois descent}

\author{Cristian D. Gonz\'alez-Avil\'es}
\address{Departamento de Matem\'aticas, Universidad de La Serena, Cisternas 1200, La Serena 1700000, Chile}
\email{cgonzalez@userena.cl}
\thanks{Partially supported by Fondecyt grant 1160004.}
\date{\today}

\subjclass[2010]{Primary 14-02, Secondary 14-01}
\keywords{Descent data, Galois descent, cartesian square. }

\maketitle
	
\begin{abstract} We present a proof of the equivalence of the standard definition of descent data on schemes with another one mentioned in the literature that involves certain cartesian diagrams. Using this equivalence, we discuss the Galois descent of both schemes and morphisms of schemes.
\end{abstract}

\topmargin -1cm

\smallskip

\maketitle

\topmargin -1cm

\setcounter{section}{-1}

\section{Introduction}

In this expository paper we present a detailed proof of the equivalence of the standard definition of descent data on schemes with another one mentioned in the literature that involves certain cartesian diagrams. See Section \ref{two}. To our knowledge, no detailed proof of this equivalence has appeared in print. As an application, we provide in Section \ref{3} the missing details of the discussion of Galois descent contained in \cite[\S6.2, Example B, pp.~139-140]{blr}. The subject of Galois descent is discussed amply in the literature, but mostly over a field. Even when a more general base scheme is allowed, some important details are omitted (for another example of such omissions, see \cite[comments after (14.20.1), p.~457]{gw}). In Section \ref{4}, as an application of the detailed discussion of Section \ref{3}, we generalize the standard result on Galois descent for morphisms of schemes over a field \cite[Proposition 2.8]{j} to an arbitrary base scheme. More precisely, we show that, if $\sp\to S$ is a finite Galois covering of schemes with Galois group $\g$ and $\delta\colon X_{1}\to X_{2}$ is an $\sp$-morphism of $\g$-schemes that descend to $S$, then $\delta$ descends to $S$ if, and only if, $\delta$ is invariant under the action of $\g$ on morphisms defined by \eqref{act3}.

\section*{Acknowledgement}

I thank Mikhail Borovoi for his constructive criticism of the first version of this paper.

\section{Preliminaries}\label{pre}

The identity morphism of an object $A$ of a category will be denoted by $1_{\be A}$.

If $S$ is a scheme, $({\rm Sch}/S\le)$ will denote the category of $S$-schemes. If $X$ is an $S$-scheme, $\aut(\le X\!/\lbe S\e)$ will denote the group of $S$-automorphisms of $X$.

Given morphisms of schemes $X\overset{f}{\to}S\overset{\,\,g}{\leftarrow}T$, we will write $X\be\times_{f,\le S,\e g}\be T$ for the fiber product of $f$ and $g$. When $f$ and $g$ are not relevant, we will write  $X\times_{S}T$ for $X\be\times_{f,\e S,\le g}\be T$.
If $u\colon X\to Y$ is an $S$-morphism of schemes, $u\times_{S}T$ will denote the $T$-morphism of schemes $u\times_{S}1_{\lbe T}\colon X\times_{S}T\to Y\times_{S}T$. 

\smallskip

Recall that a commutative diagram in a category $\s C$
\begin{equation}\label{squ}
\xymatrix@1@R=30pt@C=40pt{
Z\ar[r]^{u} \ar[d]_{v}& U \ar[d]^{\pi}\\
V\ar[r]^{w} & W
}
\end{equation}
is {\it cartesian} if for every commutative diagram of solid arrows in $\s C$
\begin{equation}\label{full}
\xymatrix{T\ar@{-->}[dr]_(.4){h}\ar@/^.9pc/[drrr]\ar@/_1.2pc/[ddr]&&\\
& Z\ar[d]\ar[rr]&& U\ar[d]\\
& V\ar[rr]&& W}
\end{equation}
there exists a unique arrow $h\colon T\to Z$ in $\s C$ such that the full diagram \eqref{full} commutes. It is easy to check that \eqref{squ} is cartesian if both horizontal arrows $u$ and $w$ are isomorphisms. Further, if \eqref{squ} is cartesian and $\psi\colon Y\isoto Z$ is an isomorphism in $\s C$, then 
\begin{equation}\label{squ2}
\xymatrix@1@R=30pt@C=40pt{
Y\ar[r]^{u\e\circ\e\psi} \ar[d]_{v\e\circ\e\psi}& U \ar[d]^{\pi}\\
V\ar[r]^{w} & W
}
\end{equation}
is cartesian as well. We will also need the following fact.
\begin{lemma}\label{ucart} If
\[
\xymatrix@1@R=30pt@C=40pt{
Z_{i}\ar[r]^{g_{i}} \ar[d]_{e_{i}}& U \ar[d]\\
V_{i}\ar[r]^{k_{i}} & W
}
\]
is a cartesian square in the category of schemes for every $i$ in some index set, then the diagram
\[
\xymatrix@1@R=30pt@C=40pt{
\coprod Z_{i}\ar[r]^{\coprod g_{i}} \ar[d]_(.5){\coprod e_{i}}& U \ar[d]\\
\coprod V_{i}\ar[r]^{\coprod k_{i}} & W.
}
\]
is cartesian as well.
\end{lemma}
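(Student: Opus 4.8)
The plan is to verify the universal property of a cartesian square (diagram \eqref{full}) directly for the coproduct diagram, reducing everything to the hypothesis that each individual square is cartesian. Write $\pi\colon U\to W$ for the common right-hand vertical arrow and, to lighten notation, set $Z=\coprod Z_{i}$, $V=\coprod V_{i}$, $g=\coprod g_{i}\colon Z\to U$ and $e=\coprod e_{i}\colon Z\to V$. Given a test scheme $T$ together with morphisms $a\colon T\to V$ and $b\colon T\to U$ satisfying $\bigl(\coprod k_{i}\bigr)\circ a=\pi\circ b$, I must produce a unique $h\colon T\to Z$ with $g\circ h=b$ and $e\circ h=a$.

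First I would split the source along the components of $V$. Since $V=\coprod V_{i}$ is a disjoint union, each $V_{i}$ is an open and closed subscheme of $V$, and these subschemes are pairwise disjoint and cover $V$. Consequently the preimages $T_{i}:=a^{-1}(V_{i})$ are open and closed in $T$, pairwise disjoint, and cover $T$, so $T=\coprod_{i}T_{i}$ as schemes. Let $a_{i}\colon T_{i}\to V_{i}$ and $b_{i}\colon T_{i}\to U$ be the restrictions of $a$ and $b$ to $T_{i}$. Restricting the identity $\bigl(\coprod k_{i}\bigr)\circ a=\pi\circ b$ to $T_{i}$, and using that $\coprod k_{i}$ restricts to $k_{i}$ on $V_{i}$, I obtain $k_{i}\circ a_{i}=\pi\circ b_{i}$ for every $i$. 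Thus each pair $(a_{i},b_{i})$ is a commutative square into the $i$-th diagram, so the hypothesis that the $i$-th square is cartesian furnishes a unique $h_{i}\colon T_{i}\to Z_{i}$ with $g_{i}\circ h_{i}=b_{i}$ and $e_{i}\circ h_{i}=a_{i}$. By the universal property of the coproduct $T=\coprod_{i}T_{i}$, the $h_{i}$ assemble into a morphism $h\colon T\to Z$ whose restriction to $T_{i}$ is $h_{i}$ (followed by the inclusion $Z_{i}\hookrightarrow Z$). Checking $g\circ h=b$ and $e\circ h=a$ is then a componentwise verification, since a morphism out of a disjoint union is determined by its restrictions.

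The one point deserving care, and the main obstacle, is uniqueness, because a priori a competitor $h'\colon T\to Z$ need not respect the decomposition $T=\coprod_{i}T_{i}$ that was extracted from $a$. To handle this I would observe that any such $h'$ induces its own decomposition via the components $Z_{i}\subseteq Z$, namely $T=\coprod_{i}(h')^{-1}(Z_{i})$; since $e\circ h'=a$ and $e$ carries $Z_{i}$ into $V_{i}$, one gets $(h')^{-1}(Z_{i})=a^{-1}(V_{i})=T_{i}$. Hence $h'$ does restrict to a morphism $T_{i}\to Z_{i}$, which satisfies the same two equations as $h_{i}$ and so equals $h_{i}$ by the uniqueness clause in the cartesianness of the $i$-th square; therefore $h'=h$.

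That schemes behave this way with respect to disjoint unions, namely that coproducts are disjoint and their formation is compatible with fiber products in one variable, is precisely the scheme-theoretic content hiding behind the otherwise purely formal manipulation, and it is what makes the splitting $T=\coprod_{i}T_{i}$ legitimate in $(\mathrm{Sch}/S\le)$.
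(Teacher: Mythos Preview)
Your proof is correct and follows essentially the same approach as the paper's sketch: both split the test scheme $T$ along the components $V_{j}$ of $\coprod V_{i}$ and then invoke the cartesianness of each individual square. Your preimages $T_{i}=a^{-1}(V_{i})$ are precisely the fiber products $T\times_{\coprod V_{i}}V_{j}$ that the paper writes, since each $V_{j}\hookrightarrow\coprod V_{i}$ is an open and closed immersion; you also supply the uniqueness argument that the paper's sketch leaves implicit.
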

\begin{proof} (Sketch)  This may be verified by starting with a commutative diagram
\[
\xymatrix{T\ar@{-->}[dr]\ar@/^1pc/[drrr]^{t_{1}}\ar@/_1.5pc/[ddr]_{t_{2}}&&\\
& \coprod Z_{i}\ar[d]_{\coprod e_{i}}\ar[rr]^{\coprod g_{i}}&& U\ar[d]\\
& \coprod V_{i}\ar[rr]^{\coprod k_{i}}&& W}
\]
and considering, for every $j$, the commutative diagram
\[
\xymatrix{T\times_{\coprod\be V_{i}}\be V_{j}\ar[dr]_(.56){\exists!\, h_{j}}\ar@/^1pc/[drrr]^{t_{1}\e\circ\e{\rm pr}_{1}}\ar@/_1.5pc/[ddr]_(.45){t_{2}\times_{\coprod\be V_{i}}1_{\lle V_{\lbe j}}}&&\\
& Z_{j}\ar[d]_{ e_{j}}\ar[rr]^{g_{j}}&& U\ar[d]\\
& V_{j}\ar[rr]^{k_{j}}&& W.}
\]
\end{proof}

For more information on fiber products and coproducts of schemes and cartesian diagrams, see  \cite[Chapter 0, \S1.2, and Chapter I, \S3.1]{ega1}.

\smallskip

\section{Descent data on schemes}\label{two}

In this Section we reformulate the standard definitions of {\it covering data} and {\it descent data} on schemes (these standard definitions can be found, for example, in \cite[Chapter 6]{blr}). We focus on schemes, but similar considerations apply to quasi-coherent modules.

\medskip

Let $f\colon\sp\to S$ be a morphism of schemes, set $\spp=\sp\times_{S}\sp$ and let $p_{\le i}\colon \spp\to\sp$ ($i=1,2$) be the canonical projection onto the $i$-th factor. Then the following diagram is cartesian 
\begin{equation}\label{can}
\xymatrix@1@R=30pt@C=40pt{
\spp\ar[r]^(.5){p_{\le 1}} \ar[d]_{p_{2}}& \sp \ar[d]^{f}\\
\sp\ar[r]^(.5){f} & S.
}
\end{equation}
Further, set $\sppp=\sp\times_{S}\e\sp\times_{S}\e\sp$ and let $p_{ jk}\colon\sppp\to\spp$ be given (set-theoretically) by $(s_{1},s_{2},s_{3})\mapsto (s_{j},s_{k})$, where
$(\e j,k)=(1,2), (1,3)$ or $(2,3)$. Note that
\begin{eqnarray}\label{eqs1}
p_{\le 1}\circ p_{\le 12}&=&p_{\le 1}\circ p_{\le 13}\\
p_{\le 1}\circ p_{\le 23}&=&p_{\le 2}\circ p_{\le 12}\label{eqs2}\\
p_{\le 2}\circ p_{\le 23}&=&p_{\le 2}\circ p_{\le 13},\label{eqs3}
\end{eqnarray}
where the first (respectively, second, third) common composition $\sppp\to\sp$ is the projection onto the first (respectively, second, third) factor.

If $\pi\colon X\to\sp$ is an $\sp$-scheme and $i=1$ or 2, we will write $p_{\le i}^{\lle *}X=X\times_{S^{\lle\prime}\be,\e p_{\lle i}}\!\spp$ and regard it as an $\spp$-scheme via $p_{\le i}^{\lle *}(\pi)$. Further, we will write $p_{\e i,\le X}=(\, p_{\le i})_{\lbe X}\colon p_{\le i}^{*}X\to X$. The $\sppp$-schemes $p_{\lbe jk}^{\le *}\e p_{\le i}^{\lle *}X$ (with structural morphisms $p_{\lbe jk}^{\le *}\e p_{\le i}^{\lle *}(\pi)$) and morphisms $p_{\le jk,\e p_{i}^{*}\lbe X}\colon
p_{\lbe jk}^{\le *}\e p_{\le i}^{\lle *}X\to p_{\le i}^{\lle *}X$ are defined similarly. The equalities \eqref{eqs1}-\eqref{eqs3} induce various identifications among these objects. For example, by \eqref{eqs3},
\begin{equation}\label{id0}
p_{\le 23,\e p_{2}^{*}X}=p_{\le 13,\e p_{2}^{*}X}.
\end{equation}

Recall that a {\it covering datum on $X$ relative to $f$} is an isomorphism of $\spp$-\e schemes $p_{1}^{*}X\simeq p_{\le 2}^{*}\le X$.

Set $\xpp=p_{1}^{*}X$ and let $\varphi\colon \xpp\isoto p_{2}^{*}X$ be a covering datum on $X$. In particular, $p_{\le 1}^{\lle *}(\pi)=p_{\le 2}^{\lle *}(\pi)\circ\varphi$. Define 
\begin{eqnarray}\label{q1}
q_{\le 1}&=&p_{\le 1,\le X}\\
\label{q2}
q_{\le 2}&=&p_{\le 2,\le X}\circ\varphi.
\end{eqnarray}
Then the following diagram, which is an instance of diagram \eqref{squ2}, is cartesian for $i=1$ and $2$:
\begin{equation}\label{cart}
\xymatrix@1@R=30pt@C=40pt{
\xpp\ar[r]^(.5){q_{\le i}} \ar[d]_(.45){p_{1}^{\lle *}(\pi)}& X \ar[d]^(.45){\pi}\\
\spp\ar[r]^(.5){p_{\le i}} &\sp.
}
\end{equation}

\smallskip

Conversely, assume that there exist cartesian diagrams of the form \eqref{cart} such that \eqref{q1} holds. Then there exist unique morphisms $\varphi\colon \xpp\to p_{2}^{*}\le X$ and $\psi\colon p_{2}^{*}\le X\to\xpp$ such that the following diagrams commute:
\[
\xymatrix{\xpp\ar[dr]_(.5){\varphi}\ar@/^1pc/[drrr]^(.5){q_{\lle 2}}\ar@/_1.4pc/[ddr]_(.4){p_{1}^{\lle *}\be(\pi)}&&\\
&p_{2}^{*}X\ar[d]_(.45){p_{2}^{\lle *}\lbe(\pi)}\ar[rr]^{p_{\le 2,\le X}}&& X\ar[d]^(.45){\pi}\\
&\spp\ar[rr]^(.48){p_{\lle 2}}&&
\sp}
\]
and
\[
\xymatrix{p_{2}^{*}\le X\ar[dr]_(.5){\psi}\ar@/^.9pc/[drrr]^{p_{\le 2,\lle X}}\ar@/_1.4pc/[ddr]_(.45){p_{2}^{\lle *}\be(\pi)}&&\\
&\xpp\ar[d]_(.45){p_{1}^{\lle *}\be(\pi)}\ar[rr]^(.48){q_{\lle 2}}&& X\ar[d]^(.45){\pi}\\
&\spp\ar[rr]^(.48){p_{\lle 2}}&&
\sp.}
\]
Since the diagrams
\[
\xymatrix{p_{2}^{*}\le X\ar[dr]_(.5){\varphi\e\circ\e\psi}\ar@/^.9pc/[drrr]^{p_{\le 2,\lle X}}\ar@/_1.4pc/[ddr]_(.45){p_{2}^{\lle *}\be(\pi)}&&\\
&p_{2}^{*}X\ar[d]_(.45){p_{2}^{\lle *}\lbe(\pi)}\ar[rr]^{p_{\le 2,\lle X}}&& X\ar[d]^(.45){\pi}\\
&\spp\ar[rr]^(.48){p_{\lle 2}}&&
\sp}
\]
and 
\[
\xymatrix{\xpp\ar[dr]_(.5){\psi\e\circ\e\varphi}\ar@/^.9pc/[drrr]^(.5){q_{\lle 2}}\ar@/_1.4pc/[ddr]_{p_{1}^{\lle *}\be(\pi)}&&\\
&\xpp\ar[d]_(.4){p_{1}^{\lle *}\lbe(\pi)}\ar[rr]^{q_{\lle 2}}&& X\ar[d]^(.45){\pi}\\
&\spp\ar[rr]^(.48){p_{\lle 2}}&&
\sp}
\]
commute, $\varphi\le\circ\le \psi$ (respectively, $\psi\circ \varphi$) is the identity morphism of $p_{2}^{*}X$ (respectively, $\xpp\e$). Thus we obtain an $\spp$-isomorphism $\varphi\colon \xpp\isoto p_{\le 2}^{*}X$ (i.e., a covering datum on $X$ relative to $f\e$) such that \eqref{q2} holds.

\smallskip

We conclude that to give a covering datum on $X$ relative to $f$ is equivalent to giving a pair of cartesian diagrams \eqref{cart} such that \eqref{q1} holds.

\smallskip

Now let $\varphi\colon \xpp\isoto p_{2}^{*}X$ again be a covering datum on $X$ relative to $f$ and define $q_{\le 1}$ and $q_{\le 2}$ by \eqref{q1} and \eqref{q2}, respectively. Further, write $\xppp=p_{\le 12}^{*}\e p_{\le 1}^{*}X=p_{\le 13}^{*}\e p_{\le 1}^{*}\le X$ \eqref{eqs1} and note that $p_{\le 12}^{*}\e p_{\le 2}^{*}X=\le p_{\le 23}^{*}\le\xpp$ \eqref{eqs2}. Then we may discuss the $\sppp$-isomorphism $p_{\le 12}^{*}\e\varphi\colon \xppp\isoto p_{\le 23}^{*}\le\xpp$ in analogy to the foregoing discussion of the $\spp$-isomorphism $\varphi\colon \xpp\isoto p_{2}^{*}\le X$. Thus we define
\begin{eqnarray}\label{q12}
q_{12}&=&p_{\le 12,\le X^{\prime\prime}}\\
\label{q13}
q_{13}&=& p_{\le 13,\le X^{\prime\prime}}\\
\label{q23}
q_{\le 23}&=& p_{\le 23,\le X^{\prime\prime}}\le\circ\le p_{\le 12}^{*}\e\varphi.
\end{eqnarray}
Since $p_{12}^{*}\e p_{1}^{*}\lbe(\pi)=p_{13}^{*}\e p_{1}^{*}\lbe(\pi)=p_{\le 12}^{*}\e p_{\le 2}^{*} \lbe(\pi)\circ\e p_{\le 12}^{*}\varphi=p_{\le 23}^{*}\e p_{\le 1}^{*} \lbe(\pi)\circ\e p_{\le 12}^{*}\varphi$, the following diagram is cartesian for $(\e j,k)=(1,2), (1,3)$ and $(2,3)$ as an instance of diagram \eqref{squ2}:  
\[
\xymatrix@1@R=30pt@C=40pt{
\xppp\ar[r]^(.5){q_{\le jk}} \ar[d]_{p_{12}^{*}\e p_{1}^{*}\lbe(\pi)}& \xpp\ar[d]^{p_{1}^{*}(\pi)}\\
\sppp\ar[r]^(.5){p_{jk}} &\spp.
}
\]
Now, by the commutativity of 
\begin{equation}\label{pain}
\xymatrix@1@R=30pt@C=40pt{
p_{jk}^{*}\xpp\ar[rr]^(.5){p_{jk,\le X^{\prime\prime}}} \ar[d]_{p_{ jk}^{*}\varphi}^{\sim}&& \xpp\ar[d]^{\varphi}_{\sim}\\
p_{jk}^{*}p_{2}^{*}X\ar[rr]^(.5){p_{jk,\le p_{2}^{*}X}} && p_{2}^{*}X
}
\end{equation}
and the equalities \eqref{q1}, \eqref{q2}, \eqref{q12}, \eqref{q13} and \eqref{q23}, we have
\begin{eqnarray}\label{eq1}
q_{\le 1}\circ q_{\le 12}&=&q_{\le 1}\circ q_{\le 13}\\
q_{\le 1}\circ q_{\le 23}&=&q_{\le 2}\circ q_{\le 12}.\label{eq2}
\end{eqnarray}
Assume now that $\varphi$ is, in fact, a {\it descent datum on $X$ relative to $f$}, i.e., the following diagram of isomorphisms of $\sppp$-schemes, where the equalities are induced by \eqref{eqs1}, \eqref{eqs2} and \eqref{eqs3}, commutes:
\begin{equation}\label{coc}
\xymatrix{
p_{\le 12}^{*}\e p_{1}^{*}X=p_{\le 13}^{*}\e p_{1}^{*}\le X	\ar[rr]^(0.5){p_{\le 13}^{*}\e\varphi}_{\sim}\ar[dr]_(.4){p_{\le 12}^{*}\e\varphi}^(.5){\sim}  && p_{\le 23}^{*}\e p_{2}^{*}\le X=p_{\le 13}^{*}\e p_{2}^{*}\le X\\
& p_{\le 12}^{*}\e p_{2}^{*}\le X=p_{\le 23}^{*}\e p_{1}^{*}X\ar[ur]_{p_{\le 23}^{*}\e\varphi}^(.4){\sim}\,. &
}
\end{equation}
Then 
\begin{equation}\label{eq3}
q_{\le 2}\circ q_{\le 23}=q_{\le 2}\circ q_{\le 13}.
\end{equation} 
Indeed, by \eqref{id0}, \eqref{q2}, \eqref{q13}, \eqref{q23}, \eqref{pain} and \eqref{coc}, we have
\[
\begin{array}{rcl}
q_{\le 2}\circ q_{\le 23}&=&p_{\e 2,\le X}\le\circ\le(\varphi\circ p_{\le 23,\le X^{\prime\prime}})\le\circ\le p_{\le 12}^{*}\e\varphi=
p_{\e 2,\le X}\circ (\e p_{\le 23,\le p_{2}^{*}X}\circ p_{\le 23}^{*}\e\varphi)\circ p_{\le 12}^{*}\e\varphi\\
&=& p_{\e 2,\le X}\circ (\e p_{\le 13,\e p_{2}^{*}X}\circ p_{\le 13}^{*}\e\varphi)=p_{\e 2,\le X}\circ\varphi\circ p_{\le 13,\le X^{\prime\prime}}=q_{\le 2}\circ q_{\le 13}.
\end{array}
\]
Thus we obtain six commutative diagrams
\begin{equation}\label{mcart}
\xymatrix@1@R=30pt@C=40pt{
\xppp\ar[r]^(.5){q_{\le jk}} \ar[d]_{p_{12}^{*}\e p_{1}^{*}\lbe(\pi)}& \xpp\ar[r]^(.5){q_{\le i}}\ar[d]_{p_{1}^{*}\lbe(\pi)}& X\ar[d]^{\pi}\\
\sppp\ar[r]^(.5){p_{jk}} &\spp\ar[r]^(.5){p_{\le i}}&\sp\,,\\
}
\end{equation}
where $i=1$ or $2$, $(\e j,k)=(1,2), (1,3)$ or $(2,3)$, the squares are cartesian, equations \eqref{q1}, \eqref{q2}, \eqref{q12}, \eqref{q13} and \eqref{q23} hold (where $\varphi$ is the covering datum on $X$ determined by the right-hand square in \eqref{mcart} for $i=2$) and the various top horizontal compositions satisfy the relations \eqref{eq1}, \eqref{eq2} and \eqref{eq3}.

\smallskip

Conversely, assume that there exist commutative diagrams of the form \eqref{mcart} with cartesian squares such that 
\eqref{q1}, \eqref{q2} (where $\varphi$ is the covering datum on $X$ determined by the right-hand square in \eqref{mcart} for $i=2$), \eqref{q12}, \eqref{q13}, \eqref{eq2} and \eqref{eq3} hold. Then \eqref{eq1} also holds since it follows from \eqref{eqs1}, \eqref{q1}, \eqref{q12} and \eqref{q13}. We will show that \eqref{q23} holds as well and that diagram \eqref{coc} commutes, i.e., $\varphi$ is a descent datum on $X$ relative to $f$.

By \eqref{q2}, \eqref{q12} and the commutativity of \eqref{pain}, the following diagram commutes
\[
\xymatrix{\xppp\ar[dr]_(.5){p_{\le 12}^{*}\varphi}\ar@/^.9pc/[drrr]^(.6){\varphi\e\circ\e q_{\lle 12}}\ar@/_2.1pc/[ddr]_(.4){p_{12}^{*}\e p_{1}^{*}\lbe(\pi)}\ar@/^1.8pc/[drrrrr]^(.6){q_{\le 2}\e\circ\e q_{\le 12}}&&&\\
&p_{\le 12}^{*}\e p_{2}^{*}\le X\ar[d]_{p_{12}^{*}\e p_{2}^{*}\lbe(\pi)}\ar[rr]^{p_{\le 12,\e p_{2}^{*}X}}&& p_{2}^{*}\le X\ar[rr]^(.48){p_{\le 2, X}}\ar[d]_{p_{2}^{*}(\pi)}&& X\ar[d]^{\pi}\\
&\sppp\ar[rr]^(.48){p_{\lle 12}}&&
\spp\ar[rr]^(.48){p_{\lle 2}}&&\sp.}
\]
Further, $p_{\le 12}^{*}\varphi$ is the unique morphism such that $p_{\le 2, X}\circ
p_{\le 12,\e p_{2}^{*}X}\circ p_{\le 12}^{*}\varphi=q_{\le 2}\e\circ\e q_{\le 12}$. Similarly, there exists an $\sppp$-isomorphism $g\colon\xppp\isoto p_{\le 23}^{*}\xpp=p_{\le 12}^{*}\e p_{2}^{*}\le X$ such that the following diagram commutes
\[
\xymatrix{\xppp\ar[dr]_(.5){g}\ar@/^.9pc/[drrr]^(.6){q_{\lle 23}}\ar@/_2.1pc/[ddr]_(.4){p_{12}^{*}\e p_{1}^{*}\lbe(\pi)}\ar@/^1.8pc/[drrrrr]^(.6){q_{\le 1}\e\circ\e q_{\le 23}}&&&\\
&p_{\le 23}^{*}\xpp\ar[d]_{p_{23}^{*}\e p_{1}^{*}\lbe(\pi)}\ar[rr]^{p_{\le 23,\le X^{\le\prime\prime}}}&& \xpp\ar[rr]^(.48){p_{\le 1, X}}\ar[d]_{p_{1}^{*}(\pi)}&& X\ar[d]^{\pi}\\
&\sppp\ar[rr]^(.48){p_{\lle 23}}&&
\spp\ar[rr]^(.48){p_{\lle 1}}&&\sp,}
\]
where we have used \eqref{q1}. Moreover, $g$ is the unique morphism that satisfies the identity $p_{\le 1, X}\circ
p_{\le 23,X^{\le\prime\prime}}\circ g=q_{\le 1}\e\circ\e q_{\le 23}$. Now  \eqref{eqs2}, \eqref{eq2} and the preceding uniqueness statements imply that $g=p_{\le 12}^{*}\varphi$, whence $q_{\le 23}=p_{\le 23,\le X^{\prime\prime}}\le\circ\le p_{\le 12}^{*}\e\varphi$, i.e., \eqref{q23} holds. Finally, the diagram with cartesian square (where the equalities come from \eqref{eqs3} and \eqref{eq3})
\[
\xymatrix{\xppp\ar[dr]^(.45){h}\ar@/^1.6pc/[drrrrr]^(.6){\hskip .7cm q_{\le 2}\e\circ\e q_{\le 23}=q_{\le 2}\e\circ\e q_{\le 13}}\ar@/_1.8pc/[ddr]_{p_{12}^{*}\e p_{1}^{*}\lbe(\pi)}&&&&\\
&p_{\le 23}^{*}\e p_{\le 2}^{*}X=p_{\le 13}^{*}\e p_{\le 2}^{*}X\ar[d]^{p_{23}^{*}\e p_{2}^{*}\lbe(\pi)=p_{13}^{*}\e p_{2}^{*}\lbe(\pi)}\ar[rrrr]^(.55){(\e p_{\le 2}\e\circ\e p_{\le 23})_{X}=(\e p_{\le 2}\e\circ\e p_{\le 13})_{X}}&&&& X\ar[d]^{\pi}\\
&\sppp\ar[rrrr]_(.48){p_{\le 2}\e\circ\e p_{\le 23}=p_{\le 2}\e\circ\e p_{\le 13}}&&&&
\sp}
\]
commutes for $h=p_{\le 23}^{*}\varphi\circ p_{\le 12}^{*}\varphi$ and $h=p_{\le 13}^{*}\varphi$. Indeed, since \eqref{pain} commutes and \eqref{q2}, \eqref{q13} and \eqref{q23} hold, we have
\[
(\e p_{\le 2}\e\circ\e p_{\le 23})_{X}\circ p_{\le 23}^{*}\varphi\circ p_{\le 12}^{*}\varphi=p_{\le 2,\e X}\circ\varphi\circ p_{\le 23,\e X^{\prime\prime}}
\circ p_{\le 12}^{*}\varphi=q_{\le 2}\circ q_{\le 23}
\]
and
\[
(\e p_{\le 2}\e\circ\e p_{\le 13})_{X}\circ p_{\le 13}^{*}\varphi=p_{\le 2,\e X}\circ\varphi\circ p_{\le 13,\e X^{\prime\prime}}
=q_{\le 2}\circ q_{\le 13}.
\]
Thus $p_{\le 23}^{*}\varphi\circ p_{\le 12}^{*}\varphi=p_{\le 13}^{*}\varphi$, i.e., the cocycle condition \eqref{coc} is satisfied.

\medskip

We conclude that to give a descent datum on $X$ relative to $f$ is equivalent to giving six commutative diagrams of the form \eqref{mcart} consisting of cartesian squares such that \eqref{q1}, \eqref{q2} (where $\varphi$ is the covering datum on $X$ determined by the right-hand square in \eqref{mcart} for $i=2$), \eqref{q12}, \eqref{q13}, \eqref{eq2} and \eqref{eq3} hold.

\medskip

To conclude this Section, we observe that, if $Y$ is an $S$-scheme, then the $\sp$-scheme $\yp=f^{*}Y=Y\times_{S}\sp$ is endowed with a canonical descent datum $c_{\e Y}\colon
p_{\le 1}^{*}\yp\isoto p_{\le 2}^{*}\yp$, namely the composite $\spp$-isomorphism
\[
p_{\le 1}^{*}\yp=p_{\le 1}^{*}f^{*}Y\simeq (\e f\circ p_{\le 1})^{*}(Y)=
(\e f\circ p_{\le 2})^{*}(Y)\simeq p_{\le 2}^{*}\yp,
\]
where the second equality holds by the commutativity of \eqref{can}. Set-theoretically, $c_{\e Y}$ can be described by the formula
\begin{equation}\label{cy}
c_{\le Y}\be(\le y,s^{\lle\prime},s^{\lle\prime},t^{\le\prime}\le)=(y,t^{\lle\prime},s^{\lle\prime},t^{\le\prime}\le),
\end{equation}
where $(y,-)\in\yp$ and $(s^{\lle\prime},t^{\le\prime}\le)\in\spp$.

\section{Galois descent of schemes}\label{3}

In this Section we use the developments of the previous Section to discuss Galois descent of schemes. Compare with \cite[\S6.2, Example B, pp.~139-140]{blr}. 

\smallskip

Recall that a morphism of schemes $f\colon\sp\to S$ is said to be {\it finite and locally free} if $f$ is affine and $f_{*}\mathcal O_{S^{\lle\prime}}$ is a finite and locally free $\mathcal O_{S}$-module. Equivalently, $f$ is finite, flat and locally of finite presentation.

\smallskip

Let $f\colon\sp\to S$ be a finite, surjective and locally free morphism (in particular, $f$ is faithfully flat and quasi-compact) and let $\g$ be a subgroup of $\aut\!\left(\sp\!/S\le\right)$. If $X$ is an $S$-scheme, an {\it action of $\g$ on $X$ over $S$} (via automorphisms) is a group homomorphism $\rho\colon\g\to\aut(X\be/S\e)$.

For every scheme $X$, set $\g\times X=\coprod_{\,\sigma\in\g}X$. Then
$\rho$ induces an action of the $S$-group scheme $\g\times S$ on $X$ over $S$, i.e., an $S$-morphism $(\g\times S)\times_{S}X\to X$ subject to well-known conditions. We will henceforth identify $(\g\times S)\times_{S}X$ and $\g\times X$ so that the preceding morphism will be written as $\g\times X\to X$.
Now set
\[
b=\coprod_{\,\sigma\in\g}1_{\le S^{\lle\prime}}\colon \g\times \sp\to \sp, (\sigma,s^{\le\prime}\e)\mapsto s^{\le\prime}.
\]
We will regard $\g\times\sp$ as an $S$-scheme via $f\circ b$ (whence $b$ is an $S$-morphism). The canonical action of $\g$ on $\sp$ over $S$, i.e., the $S$-morphism $\coprod_{\,\sigma\in\g}\sigma\colon\coprod_{\,\sigma\in\g}\sp\to\sp $ will be written as
\begin{equation}\label{a}
a\colon \g\times \sp\to \sp, (\sigma,s^{\le\prime}\e)\mapsto \sigma\le s^{\le\prime}.
\end{equation}

We now assume that $f$ is a {\it Galois covering} with Galois group $\g$, i.e., the morphism of $S$-schemes
\begin{equation}\label{gal}
\vartheta=(b,a)_{S}\colon \g\times\sp\to\spp, \,(\sigma,s^{\le\prime}\e)\mapsto (s^{\le\prime}, \sigma\le s^{\le\prime}\e),
\end{equation}
is an {\it isomorphism}.

For example, if $K/k$ is a finite Galois extension of fields with Galois group $\g$, then the canonical morphism $f\colon\spec K\to\spec k$ is a Galois covering. In effect, in this case $\vartheta$ \eqref{gal} is the isomorphism of $k$-schemes induced by the isomorphism of $k$-algebras $K\otimes_{\e k}K\isoto \prod_{\e\sigma\in\g} K, x\otimes y\mapsto \prod_{\e\sigma\in\g} x\le\sigma(y)$.

Clearly, the following diagrams commute
\begin{equation}\label{gal3}
\xymatrix{
\g\times\sp\ar[rr]^(0.5){\vartheta}_{\!\sim}\ar[dr]_(.4){b}  && \spp\ar[dl]^(.4){p_{\lle 1}} \\
& \sp &
}
\end{equation}
and
\begin{equation}\label{gal2}
\xymatrix{
\g\times\sp\ar[rr]^(0.5){\vartheta}_{\!\sim}\ar[dr]_(.4){a}  && \spp\ar[dl]^(.4){p_{\lle 2}} \\
& \sp\,. &
}
\end{equation}

Further, since \eqref{gal} is an isomorphism, the morphism of $S$-schemes
\begin{equation}\label{galt}
\varrho\colon \g\times\g\times\sp\to\sppp, \,(\sigma,\tau,s^{\le\prime}\e)\mapsto (s^{\le\prime}, \tau s^{\le\prime},(\sigma\tau) s^{\le\prime}\e),
\end{equation}
is an isomorphism as well.

We now define $S$-morphisms $\widetilde{p}_{jk}\colon\g\times\g\times\sp\to\g\times\sp$ by the formulas
\begin{eqnarray}\label{pjk1}
\widetilde{p}_{12}(\sigma,\tau,s^{\le\prime})&=&(\tau,s^{\le\prime}\e),\\
\label{pjk2}
\widetilde{p}_{13}(\sigma,\tau,s^{\le\prime})&=&(\sigma\tau,s^{\le\prime}\e),\\
\label{pjk3}
\widetilde{p}_{23}(\sigma,\tau,s^{\le\prime})&=&(\sigma,\tau s^{\le\prime}\e).
\end{eqnarray}
Then the following diagram commutes for $(\e j,k)=(1,2), (1,3)$ and $(2,3)$:
\begin{equation}\label{comm}
\xymatrix@1@R=30pt@C=40pt{
\g\times\g\times\sp\ar[rr]^(.5){\varrho}_{\sim} \ar[d]_{\widetilde{p}_{jk}}&& \sppp \ar[d]^{p_{jk}}\\
\g\times\sp\ar[rr]^(.5){\vartheta}_{\sim} &&\spp
}
\end{equation}

\smallskip

Let $\pi\colon X\to\sp$ be an $\sp$-scheme and recall the schemes $\xpp=p_{\le 1}^{\lle *}\le X=X\times_{\pi,\e S^{\lle\prime}\lbe,\e p_{1}}\spp$ and $\xppp=p_{\le 12}^{*}\e p_{1}^{*}\lbe X=p_{\le 13}^{*}\e p_{1}^{*}\lbe X$. We will make the identifications
\begin{eqnarray*}
\g\times X&=& X\times_{S^{\lle\prime}\lbe,\e b}(\g\times \sp)=b^{*}\be X\\
1_{\g}\times\pi&=&b^{*}\lbe(\pi)=\vartheta^{*}\be(\e p_{\le 1}^{*}(\pi))\quad(\text{see \eqref{gal3}})\\
\g\times\g\times X&=&(\e b\circ \widetilde{p}_{12}\e)^{*}\lbe X=(\e b\circ \widetilde{p}_{13}\e)^{*}\lbe X\\
1_{\g}\times 1_{\g}\times\pi&=&(\e b\circ \widetilde{p}_{12}\e)^{*}(\pi)=\widetilde{p}_{12}^{\,\e *}(\e \vartheta^{\e *}\lbe p_{\le 1}^{*}(\pi))=\varrho^{*}\be(\e p_{\le 12}^{*}\e p_{\le 1}^{*}(\pi))\quad(\text{see \eqref{comm}}).
\end{eqnarray*}
Via the above identifications, $\vartheta$ and $\varrho$ induce isomorphisms
\begin{equation}\label{galp}
\vartheta_{\lbe X^{\prime\prime}}\colon \g\times X\isoto \xpp,\, (\sigma,x)\mapsto (x,\pi(x),\sigma\pi(x)),
\end{equation}
and
\begin{equation}\label{galp2}
\varrho_{\lle X^{\prime\prime\prime}}\colon \g\times\g\times X\isoto \xppp,\,(\sigma,\tau,x)\mapsto(x,\pi(x),\tau\pi(x),(\sigma\tau)\pi(x)),
\end{equation}
where we have used the commutativity of \eqref{gal3} and \eqref{comm} to obtain the indicated set-theoretic formulas.

Now let $\rho\colon \g\to\aut\!\left(X/S\right)$ be an action of $\g$ on  $X$ over $S$ which is compatible with the canonical action of $\g$ on $\sp$ over $S$, i.e., if $\g\times X\to X$ is the $S$-morphism induced by $\rho$, then the following diagram of $S$-morphisms commutes 
\begin{equation}\label{act2}
\xymatrix@1@R=30pt@C=40pt{
\g\times X\ar[r]\ar[d]_{1_{\lbe\g}\times\e \pi}& X \ar[d]^{\pi} \\
\g\times\sp\ar[r]^{a} &\sp\,,
}
\end{equation}
where $a$ is given by \eqref{a}. We will show that $\rho$ defines a descent datum on $X$ relative to $f$ by constructing a diagram of the form \eqref{mcart} with cartesian squares such that \eqref{q1}, \eqref{q2} (where $\varphi$ is the covering datum on $X$ determined by the right-hand square in \eqref{mcart} for $i=2$), \eqref{q12}, \eqref{q13}, \eqref{eq2} and \eqref{eq3} hold (see the previous Section).

\smallskip

We begin by noting that \eqref{act2} may be written as
\begin{equation}\label{act2.1}
\xymatrix@1@R=30pt@C=40pt{
\displaystyle\coprod_{\sigma\in\g} X\ar[r]^(.55){\coprod \rho(\sigma)} \ar[d]_(.55){\coprod \pi}& X \ar[d]^{\pi}\\
\displaystyle\coprod_{\sigma\in\g} \sp\ar[r]^{\coprod \sigma} & \sp.
}
\end{equation}
Now since 
\begin{equation}\label{srho}
\xymatrix@1@R=35pt@C=45pt{
X\ar[r]^{\rho(\sigma)}_{\sim} \ar[d]_{\pi}& X \ar[d]^{\pi} \\
\sp\ar[r]^{\sigma}_{\sim} &\sp
}
\end{equation}
is cartesian for every $\sigma\in\g$, Lemma \eqref{ucart} shows that the equivalent diagrams \eqref{act2} and \eqref{act2.1} are cartesian as well. We now observe that, if
\begin{equation}\label{q2a}
q_{\le 2}=\big(\e\coprod\rho(\sigma)\big)\be\circ \vartheta_{\be X^{\prime\prime}}^{-1},
\end{equation}
then the cartesian square \eqref{act2} decomposes as
\begin{equation}\label{twin}
\xymatrix@C=35pt{
\g\times X\ar[r]^(.55){\vartheta_{\be X^{\prime\prime}}}_{\sim}\ar@/^1.7pc/[rr]^{\coprod\rho(\sigma)} \ar[d]_{1_{\lbe\g}\times\e \pi\,=\,\vartheta^{*}\be(\e p_{\le 1}^{*}(\pi))}& \xpp\ar[r]^(0.5){q_{\le 2}}\ar[d]_{p_{1}^{*}\lbe(\pi)}& X\ar[d]^{\pi}\\
\g\times \sp\ar[r]^(.5){\vartheta}_{\sim}\ar@/_1.2pc/[rr]_{a}& \spp\ar[r]^(0.5){p_{\le 2}} & \sp,}
\end{equation}
where the lower part of the diagram commutes by the commutativity of \eqref{gal2}. We conclude that the right-hand square in \eqref{twin} is cartesian. Thus, setting $q_{\le 1}=p_{\le 1,\lle X}$ (whence \eqref{q1} holds), there exist cartesian diagrams for  $i=1$ and $2$
\[
\xymatrix@1@R=30pt@C=40pt{
\xpp\ar[r]^(.5){q_{\le i}} \ar[d]_{p_{1}^{*}\lbe(\pi)}& X \ar[d]^{\pi}\\
\spp\ar[r]^(.5){p_{\le i}} &\sp\,,
}
\]
that define a covering datum $\varphi\colon \xpp\isoto p_{\le 2}^{*}X$ on $X$ relative to $f\e$ such that \eqref{q2} holds.

Now let $\widetilde{q}_{jk}\colon \g\times\g\times X\to \g\times X$ be given by the formulas
\begin{eqnarray}\label{pt1}
\widetilde{q}_{12}(\sigma,\tau,x)&=&(\tau,x\e),\\
\label{pt2}
\widetilde{q}_{13}(\sigma,\tau,x)&=&(\sigma\tau,x\e),\\
\label{pt3}
\widetilde{q}_{23}(\sigma,\tau,x)&=&(\sigma,\rho(\tau)x\e).
\end{eqnarray}
Then \eqref{q1}, \eqref{galp}, \eqref{pt1} and \eqref{pt3} yield
\begin{equation}\label{once}
q_{\le 1}\circ \vartheta_{\be X^{\prime\prime}}\circ \widetilde{q}_{\le 23} =\big(\e\coprod\rho(\sigma)\big)\be\circ \widetilde{q}_{\le 12}.
\end{equation}
Further, since $\rho(\sigma)(\e\rho(\tau)x)=\rho(\e\sigma\tau)x$ for all $(\sigma,\tau,x)\in\g\times\g\times X$, we have
\begin{equation}\label{rnice}
\big(\e\coprod\rho(\sigma)\big)\be\circ \widetilde{q}_{\le 23} =\big(\e\coprod\rho(\sigma)\big)\be\circ \widetilde{q}_{\le 13}.
\end{equation}
Define
\[
q_{jk}=\vartheta_{\be X^{\prime\prime}}\circ\widetilde{q}_{\le jk}\circ \varrho_{\lbe X^{\prime\prime\prime}}^{-1}.
\]
Then \eqref{eq2} and \eqref{eq3} follow at once from \eqref{q2a}. \eqref{once} and \eqref{rnice}. Further, since $\widetilde{q}_{\le jk}=\widetilde{p}_{\le jk,\e \g\times X}$ for $(\e j,k)=(1,2)$ and $(1,3)$, the commutativity of \eqref{comm} shows that $q_{\le jk}= p_{\le jk,\le X^{\prime\prime}}$ for such $(\e j,k)$, i.e., \eqref{q12} and \eqref{q13} hold. Next, the diagram 
\begin{equation}\label{qt}
\xymatrix@1@R=30pt@C=40pt{
\g\times\g\times X\ar[r]^(.6){\widetilde{q}_{\le jk}} \ar[d]_{1_{\g}\times 1_{\g}\times \pi} & \g \times X \ar[d]^{1_{\g}\times \pi}\\
\g\times \sp\ar[r]^(.6){\widetilde{p}_{jk}} &\sp
}
\end{equation}
is cartesian for $(\e j,k)=(1,2),(1,3)$ and $(2,3)$. This is clear if $(\e j,k)=(1,2)$ or $(1,3)$. If $(\e j,k)=(2,3)$, then \eqref{qt} is cartesian because \eqref{act2} is cartesian.
Now \eqref{qt} decomposes as 
\begin{equation}\label{twin2}
\xymatrix@C=35pt{
\g\times\g\times X\ar[r]^(.6){\varrho_{\lbe X^{\prime\prime\prime}}}_(.55){\sim}\ar@/^2.5pc/[rrr]^{\widetilde{q}_{\le jk}} \ar[dd]_{1_{\g}\times 1_{\g}\times \pi\,=\,\varrho^{*}\be(\e p_{\le 12}^{*}\e p_{\le 1}^{*}(\pi))}& \xppp\ar[r]^(0.5){q_{\le jk}}\ar[dd]_{p_{12}^{*}\e p_{1}^{*}\lbe(\pi)}& \xpp\ar[r]^(0.4){\vartheta_{\be X^{\prime\prime}}^{-1}}_(.4){\sim}\ar[dd]_{p_{1}^{*}\lbe(\pi)}&\g\times X\ar[dd]^{1_{\g}\times \pi}\\
&&\\
\g\times\g\times \sp\ar[r]^(.6){\varrho}_(.6){\sim}\ar@/_1.5pc/[rrr]_{\widetilde{p}_{jk}}& \sppp\ar[r]^(0.5){p_{\le jk}} & \spp\ar[r]^(.4){\vartheta^{-1}}_(.4){\sim}&\g\times\sp\,,}
\end{equation}
where the bottom part of the diagram commutes by the commutativity of \eqref{comm}. Consequently, the central square above is cartesian. Thus we obtain the desired commutative diagrams with cartesian squares
\[
\xymatrix@1@R=30pt@C=40pt{
\xppp\ar[r]^(.5){q_{\le jk}} \ar[d]_{p_{12}^{*}\e p_{1}^{*}\lbe(\pi)}& \xpp\ar[r]^(.5){q_{\le i}}\ar[d]_{p_{1}^{*}\lbe(\pi)}& X\ar[d]^{\pi}\\
\sppp\ar[r]^(.5){p_{jk}} &\spp\ar[r]^(.5){p_{\le i}}&\sp.\\
}
\]
such that \eqref{q1}, \eqref{q2}, \eqref{q12}, \eqref{q13}, \eqref{eq2} and \eqref{eq3} hold.

\smallskip

The descent datum $\varphi\colon \xpp\isoto p_{2}^{*}X$ on $X$ relative to $f$ thus associated to $\rho$ may be described (set-theoretically) as follows. By \eqref{q2} and \eqref{q2a}, we have
\begin{equation}\label{for}
\coprod\rho(\sigma)=p_{\le 2,\le X}\circ\varphi\circ \be \vartheta_{\be X^{\prime\prime}}.
\end{equation}
It then follows that $\varphi$ is given by the formula
\[
\varphi(x,\pi(x),s^{\e\prime})=(\e\rho(\sigma)x,\pi(x),s^{\e\prime}\le),
\]
where $\sigma$ is the unique element of $\g$ such that $s^{\e\prime}=\sigma\pi(x)$.

\begin{example}\label{tact} Let $Y$ be an $S$-scheme. Then $\yp=Y\times_{S}\le\sp$ is canonically endowed with an action of $\g$ over $S$ that is compatible with $a$, namely $\coprod(1_{Y}\!\times_{S}\sigma)\colon\g\times\yp\to\yp$. The associated descent datum on $\yp$ (relative to $f\e$) is the isomorphism of $\spp$-schemes  $c_{\e Y}\colon
p_{\le 1}^{*}\yp\isoto p_{\le 2}^{*}\yp$ \eqref{cy}.	
\end{example}

\section{Galois descent of morphisms}\label{4}

We keep the notation and hypotheses of the previous Section. In this Section we generalize the standard result \cite[Proposition 2.8]{j} on the Galois descent of morphisms of $k$-schemes, where $k$ is a field, to an arbitrary base scheme $S$.

\smallskip

For $i=1$ or 2, let $\pi_{i}\colon X_{i}\to\sp$ be an $\sp$-scheme equipped 
with an action $\rho_{\e i}\colon \g\to\aut\!\left(X_{i}/S\right)$ that is compatible with the canonical action of $\g$ on $\sp$ over $S$. If $\delta\colon X_{1}\to X_{2}$ is an $\sp$-morphism, i.e., $\pi_{2}\circ\delta=\pi_{1}$, then the commutativity of \eqref{srho} (for both $\rho_{1}$ and $\rho_{\le 2}$) shows that $\rho_{2}(\sigma)\circ \delta\circ \rho_{1}(\sigma)^{-1}\colon X_{1}\to X_{2}$ is a morphism of $\sp$-schemes for every $\sigma\in\g$. Thus we may define a left action of $\g$ on the set $\Hom_{S^{\lle\prime}}(X_{1},X_{2})$ by
\begin{equation}\label{act3}
\g\times \Hom_{S^{\lle\prime}}(X_{1},X_{2})\to \Hom_{S^{\lle\prime}}(X_{1},X_{2}), (\sigma,\delta)\mapsto \rho_{2}(\sigma)\circ \delta\circ \rho_{1}(\sigma)^{-1}.
\end{equation}

Now, for $i=1$ and $2$, let $\varphi_{\le i}\colon p_{\le 1}^{*}X_{i}\isoto p_{\le 2}^{*}X_{i}$ be the descent datum on $X_{i}$ associated to $\rho_{\e i}$ in the previous Section. Note that $p_{\le 2}^{*}(\pi_{\e i})\circ\varphi_{\e i}=p_{\le 1}^{*}(\pi_{i})$ for $i=1$ and 2.

\begin{proposition} Let $\delta\in \Hom_{S^{\lle\prime}}(X_{1},X_{2})$. Then $\delta$ is invariant under the action of $\g$ if, and only if, the diagram
\begin{equation}\label{poq}
\xymatrix@1@R=30pt@C=40pt{
p_{\le 1}^{*}X_{1}\e\ar[r]^{\varphi_{\le 1}}_{\sim} \ar[d]_(.55){p_{\le 1}^{*}(\delta)}& p_{\le 2}^{*}X_{1} \ar[d]^{p_{\le 2}^{*}(\delta)}\\
p_{\le 1}^{*}X_{2}\,\ar[r]^{\varphi_{\le 2}}_{\sim} & p_{\le 2}^{*}X_{2}
}
\end{equation}
commutes.
\end{proposition}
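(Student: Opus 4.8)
The plan is to translate the asserted commutativity of \eqref{poq} into a single equality of morphisms that can be checked after precomposing with the isomorphism $\vartheta_{X_{1}^{\prime\prime}}\colon\g\times X_{1}\isoto p_{\le 1}^{*}X_{1}$ of \eqref{galp}. Both composites $\varphi_{\le 2}\circ p_{\le 1}^{*}(\delta)$ and $p_{\le 2}^{*}(\delta)\circ\varphi_{\le 1}$ are morphisms of $\spp$-schemes $p_{\le 1}^{*}X_{1}\to p_{\le 2}^{*}X_{2}$, and their structural morphisms to $\spp$ both equal $p_{\le 1}^{*}(\pi_{1})$ (here one uses the relation $p_{\le 2}^{*}(\pi_{i})\circ\varphi_{i}=p_{\le 1}^{*}(\pi_{i})$ noted before the statement). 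Since $p_{\le 2}^{*}X_{2}=X_{2}\times_{\pi_{2},\e\sp,\e p_{2}}\spp$ is a fiber product with projections $p_{2,X_{2}}$ and $p_{\le 2}^{*}(\pi_{2})$, its universal property then reduces the commutativity of \eqref{poq} to the single equality $p_{2,X_{2}}\circ\varphi_{\le 2}\circ p_{\le 1}^{*}(\delta)\circ\vartheta_{X_{1}^{\prime\prime}}=p_{2,X_{2}}\circ p_{\le 2}^{*}(\delta)\circ\varphi_{\le 1}\circ\vartheta_{X_{1}^{\prime\prime}}$ of morphisms $\g\times X_{1}\to X_{2}$.

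Next I would evaluate the two sides of this equality separately. For the right-hand side, functoriality of the pullback $p_{\le 2}^{*}$ gives $p_{2,X_{2}}\circ p_{\le 2}^{*}(\delta)=\delta\circ p_{2,X_{1}}$, and then formula \eqref{for} applied to $X_{1}$, namely $p_{2,X_{1}}\circ\varphi_{\le 1}\circ\vartheta_{X_{1}^{\prime\prime}}=\coprod_{\sigma}\rho_{1}(\sigma)$, yields $\delta\circ\coprod_{\sigma}\rho_{1}(\sigma)=\coprod_{\sigma}\big(\delta\circ\rho_{1}(\sigma)\big)$. For the left-hand side I would first establish the naturality identity $p_{\le 1}^{*}(\delta)\circ\vartheta_{X_{1}^{\prime\prime}}=\vartheta_{X_{2}^{\prime\prime}}\circ\big(\coprod_{\sigma}\delta\big)$, where $\coprod_{\sigma}\delta=1_{\g}\times\delta\colon\g\times X_{1}\to\g\times X_{2}$; granting this, formula \eqref{for} applied to $X_{2}$ gives $p_{2,X_{2}}\circ\varphi_{\le 2}\circ\vartheta_{X_{2}^{\prime\prime}}\circ\big(\coprod_{\sigma}\delta\big)=\big(\coprod_{\sigma}\rho_{2}(\sigma)\big)\circ\big(\coprod_{\sigma}\delta\big)=\coprod_{\sigma}\big(\rho_{2}(\sigma)\circ\delta\big)$. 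Comparing these two coproduct decompositions, \eqref{poq} commutes if and only if $\rho_{2}(\sigma)\circ\delta=\delta\circ\rho_{1}(\sigma)$ for every $\sigma\in\g$, which is exactly the condition $\rho_{2}(\sigma)\circ\delta\circ\rho_{1}(\sigma)^{-1}=\delta$ defining invariance of $\delta$ under \eqref{act3}.

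The step I expect to require the most care is the naturality identity $p_{\le 1}^{*}(\delta)\circ\vartheta_{X_{1}^{\prime\prime}}=\vartheta_{X_{2}^{\prime\prime}}\circ\big(\coprod_{\sigma}\delta\big)$. Under the identifications $\g\times X_{i}=X_{i}\times_{\pi_{i},\e\sp,\e b}(\g\times\sp)$ and $p_{\le 1}^{*}X_{i}=X_{i}\times_{\pi_{i},\e\sp,\e p_{1}}\spp$, the morphism $\vartheta_{X_{i}^{\prime\prime}}$ is the base change $1_{X_{i}}\times_{\sp}\vartheta$ of $\vartheta$ along $\pi_{i}$ (this uses $p_{\le 1}\circ\vartheta=b$, i.e.\ the commutativity of \eqref{gal3}), whereas $\coprod_{\sigma}\delta$ and $p_{\le 1}^{*}(\delta)$ act as $\delta\times_{\sp}1$ on the other factor; since $\delta$ and $\vartheta$ operate on independent factors of the respective fiber products, the square commutes by the functoriality of fiber products. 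The set-theoretic descriptions \eqref{galp} and \eqref{for} make all of this bookkeeping transparent and can be used to double-check the identity, but the rigorous justification rests on the universal properties of the cartesian squares assembled in the previous Section rather than on points.
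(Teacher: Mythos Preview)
Your proof is correct and uses essentially the same ingredients as the paper's: the naturality of $\vartheta_{X_i^{\prime\prime}}$ with respect to $\delta$ (the left square of the paper's diagram \eqref{geq2}), the identity $p_{2,X_2}\circ p_2^{*}(\delta)=\delta\circ p_{2,X_1}$ (the right square), formula \eqref{for}, and the universal property of $p_2^{*}X_2$ as a fiber product (the cartesian square \eqref{kart}). The only difference is organizational: the paper treats the two implications asymmetrically, invoking the universal property only for the converse, whereas you invoke it once at the outset to reduce both directions to a single equality of morphisms $\g\times X_1\to X_2$, which is a slightly more streamlined packaging of the same argument.
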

\begin{proof} By the definition \eqref{act3}, we need to show that \eqref{poq} commutes if, and only if,
\begin{equation}\label{geq}
\xymatrix@1@R=30pt@C=40pt{
\g\times X_{1}\e\ar[r]^(.58){\coprod \rho_{\le 1}\lbe(\sigma)} \ar[d]_(.55){1_{\g}\times \delta}& X_{1} \ar[d]^{\delta}\\
\g\times X_{2}\,\ar[r]^(.58){\coprod \rho_{\le 2}(\sigma)} & X_{2}
}
\end{equation}
commutes. By \eqref{for} applied to both $\rho_{\e 1}$ and $\rho_{\e 2}$, the preceding diagram decomposes as
\begin{equation}\label{geq2}
\xymatrix@C=35pt{
\g\times X_{1}\ar[r]^(.5){\vartheta_{\be p_{\lbe 1}^{\lbe *}\be X_{1}}}_{\sim} \ar[d]_{1_{\g}\times\delta\,=\,\vartheta_{\!\be p_{\lbe 1}^{\lbe *}\be X_{2}}^{\le *}\lbe(\e p_{\lle 1}^{*}(\delta))}& p_{\lle 1}^{*}X_{1}\ar[r]^(.5){\varphi_{\le 1}}_{\!\sim}\ar[d]_{p_{\lle 1}^{*}(\delta)}& p_{\lle 2}^{*}X_{1}\ar[r]^{p_{\le 2,X_{1}}}\ar[d]^{p_{\lle 2}^{*}(\delta)}& X_{1}\ar[d]^{\delta}\\
\g\times X_{2}\ar[r]^(.5){\vartheta_{\be p_{\lbe 1}^{\lbe *}\be X_{2}}}_{\sim}& p_{\le 1}^{*}X_{2}\ar[r]^(0.5){\varphi_{2}}_{\sim} & p_{\le 2}^{*}X_{2}\ar[r]^(.5){p_{\lle 2, X_{2}}}& X_{2}\,,}
\end{equation}
where the left-hand and right-hand squares commute. Thus, if \eqref{poq} commutes, then \eqref{geq} commutes as well. Conversely, assume that \eqref{geq}, i.e., the outer diagram in \eqref{geq2}, commutes. To show that \eqref{poq} commutes, it suffices to check that the diagram with cartesian square
\begin{equation}\label{kart}
\xymatrix{p_{\lle 1}^{*}X_{1}\ar[dr]_(.45){h}\ar@/^1.6pc/[drrr]^(.6){\hskip .5cm p_{\le 2,\le X_{2}}\e\circ\e \varphi_{\e 2}\e\circ\e p_{\le 1}^{*}(\delta)}\ar@/_1.8pc/[ddr]_{p_{\le 2}^{*}(\pi_{2})\e\circ\e\varphi_{2}\e\circ\e p_{1}^{*}\lbe(\delta)}\\
&p_{\lle 2}^{*}X_{2}\ar[d]_{p_{\lle 2}^{*}(\pi_{2})}\ar[rr]^(.55){p_{\le 2,X_{2}}}&& X_{2}\ar[d]^{\pi_{\e 2}}\\
&\spp\ar[rr]_(.48){p_{\le 2}}&&\sp\,,
}
\end{equation}
commutes for $h=\varphi_{\e 2}\e\circ\e p_{\le 1}^{*}(\delta)$ and $h= p_{\le 2}^{*}(\delta)\circ \varphi_{\e 1}$. The above diagram clearly commutes if $h=\varphi_{\e 2}\e\circ\e p_{\le 1}^{*}(\delta)$. Now, since $\vartheta_{\be p_{\lbe 1}^{\lbe *}\be X_{1}}$ is an isomorphim and the outer diagram and left-hand square in \eqref{geq2} commute, we have
$p_{\le 2,\e X_{2}}\circ p_{\le 2}^{*}(\delta)\circ\varphi_{\e 1}=p_{\le 2,\e X_{2}}\circ \varphi_{\e 2}\circ p_{\le 1}^{*}(\delta)$, i.e., the top triangle of diagram \eqref{kart} commutes when $h= p_{\le 2}^{*}(\delta)\circ \varphi_{\e 1}$. The commutativity of the lower triangle in 
\eqref{kart} when $h= p_{\le 2}^{*}(\delta)\circ \varphi_{\e 1}$ can be checked using the identities $\pi_{2}\circ\delta=\pi_{1}$ and $p_{\le 2}^{*}(\pi_{\e i})\circ\varphi_{\e i}=p_{\le 1}^{*}(\pi_{i})$ ($i=1$ and 2).
\end{proof}

\smallskip

Recall now that the descent datum $\varphi_{\le i}\colon p_{\le 1}^{*}X_{i}\isoto p_{\le 2}^{*}X_{i}$ is said to be {\it effective} if there exist $S$-schemes $Y_{i}$ and $\sp$-isomorphisms $\theta_{i}\colon X_{i}\isoto Y_{i}^{\le\prime}$ such that the diagram
\[
\xymatrix@1@R=30pt@C=40pt{
p_{\lle 1}^{*}X_{i}\ar[r]^(.5){\varphi_{i}}_{\!\sim} \ar[d]_{p_{\lle 1}^{*}\lbe(\theta_{i})}^{\sim}& p_{\lle 2}^{*}X_{i} \ar[d]^{p_{\lle 2}^{*}\lbe(\theta_{i})}_{\sim}\\
p_{\le 1}^{*}Y_{i}^{\le\prime}\ar[r]^(.5){c_{\le Y_{\lbe i}}}_{\!\sim} & p_{\le 2}^{*}Y_{i}^{\le\prime}.
}
\]
commutes. If this is the case, then we say that {\it $X_{i}$ descends to $Y_{i}$} (or {\it to $S\e$}). By \cite[VIII, Corollary 7.6]{sga1}, $X_{i}$ descends to $S$ if $\pi_{i}\colon X_{i}\to\sp$ is quasi-projective.

\begin{corollary} Assume that, for $i=1$ and $2$, $X_{i}$ descends to $Y_{i}$ and let $\theta_{i}\colon X_{i}\isoto Y_{i}^{\le\prime}$ be the corresponding isomorphism of $\sp$-schemes. Let $\delta\colon X_{1}\to X_{2}$ be an $\sp$-morphism and define $\varepsilon\colon Y_{1}^{\le\prime}\to Y_{2}^{\le\prime}$ by the commutativity of the diagram
\[
\xymatrix@1@R=30pt@C=40pt{
X_{1}\ar[r]^(.5){\delta} \ar[d]_{\theta_{1}}^{\sim}& X_{2} \ar[d]^{\theta_{2}}_{\sim}\\
Y_{1}^{\le\prime}\ar[r]^(.5){\varepsilon} &Y_{2}^{\le\prime}.
}
\]
Then $\varepsilon=\psi\times_{S}\sp$ for some $S$-morphism $\psi\colon Y_{1}\to Y_{2}$, if, and only if, $\varepsilon$ is invariant under $\g$ \eqref{act3}, i.e., for every $\sigma\in\g$, the diagram
\[
\xymatrix@1@R=30pt@C=40pt{
Y_{1}^{\le\prime}\e\ar[r]^(.5){1_{Y_{\lbe 1}}\times\e\sigma} \ar[d]_{\varepsilon}& Y_{1}^{\le\prime} \ar[d]^{\varepsilon}\\
Y_{2}^{\le\prime}\,\ar[r]^(.5){1_{Y_{\lbe 2}}\times\e\sigma} & Y_{2}^{\le\prime}
} 
\]
commutes.
\end{corollary}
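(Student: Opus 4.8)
The plan is to chain two equivalences: one supplied by the Proposition above and one supplied by faithfully flat descent of morphisms. Throughout I write $Y_i^{\prime}=Y_i\times_S\sp=f^*Y_i$.

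First I would apply the Proposition to the pair $Y_1^{\prime},Y_2^{\prime}$ in place of $X_1,X_2$, and to $\varepsilon$ in place of $\delta$. By Example \ref{tact}, each $Y_i^{\prime}$ carries the canonical action of $\g$ over $S$, namely $\coprod(1_{Y_i}\times_S\sigma)$, which is compatible with $a$, and the descent datum on $Y_i^{\prime}$ associated to this action is exactly the canonical descent datum $c_{\e Y_i}\colon p_{1}^{*}Y_i^{\prime}\isoto p_{2}^{*}Y_i^{\prime}$ of \eqref{cy}. For this choice of data the action \eqref{act3} on $\Hom_{S^{\prime}}(Y_1^{\prime},Y_2^{\prime})$ sends $(\sigma,\varepsilon)$ to $(1_{Y_2}\times_S\sigma)\circ\varepsilon\circ(1_{Y_1}\times_S\sigma)^{-1}$, so the invariance of $\varepsilon$ under $\g$ is precisely the commutativity of the square displayed in the statement. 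Hence the Proposition gives the equivalence: $\varepsilon$ is invariant under $\g$ if, and only if, the square formed by $c_{\e Y_1}$, $c_{\e Y_2}$, $p_{1}^{*}(\varepsilon)$ and $p_{2}^{*}(\varepsilon)$ commutes, i.e.\ $\varepsilon$ is compatible with the canonical descent data on $Y_1^{\prime}$ and $Y_2^{\prime}$.

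Second I would match the morphisms compatible with $c_{\e Y_1}$ and $c_{\e Y_2}$ with those of the form $\psi\times_S\sp$. For an $S$-morphism $\psi\colon Y_1\to Y_2$, the base change $\psi\times_S\sp=f^*\psi$ is automatically compatible with $c_{\e Y_1}$ and $c_{\e Y_2}$: this is the naturality in $Y$ of the canonical isomorphism $c_{\e Y}\colon p_{1}^{*}f^*Y\isoto p_{2}^{*}f^*Y$ induced by the equality $f\circ p_1=f\circ p_2$ (alternatively, it is read off directly from the set-theoretic formula \eqref{cy}). Conversely, since $f$ is faithfully flat and quasi-compact, faithfully flat descent of morphisms \cite[\S6.1]{blr} asserts that the map
\[
\Hom_S(Y_1,Y_2)\longrightarrow\bigl\{\,\varepsilon\colon Y_1^{\prime}\to Y_2^{\prime}\ \text{compatible with}\ c_{\e Y_1}\ \text{and}\ c_{\e Y_2}\,\bigr\},\quad \psi\longmapsto\psi\times_S\sp,
\]
is a bijection. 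Thus $\varepsilon$ is compatible with $c_{\e Y_1}$ and $c_{\e Y_2}$ if, and only if, $\varepsilon=\psi\times_S\sp$ for a (necessarily unique) $S$-morphism $\psi\colon Y_1\to Y_2$.

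Combining the two equivalences proves the Corollary. The first step is the routine half: it is a direct application of the Proposition once one checks that the canonical $\g$-action on $Y_i^{\prime}$ has $c_{\e Y_i}$ as its associated descent datum, which is the content of Example \ref{tact}. The substantive input, and the main obstacle, is the second step: the bijectivity statement encodes the full faithfulness of base change, i.e.\ the classical theorem that faithfully flat quasi-compact morphisms permit descent of morphisms (note that no quasi-projectivity hypothesis is needed here, since $Y_1$ and $Y_2$ are already given). I would take this as the single nontrivial citation, and the one place that needs care is verifying that the compatibility of $\psi\times_S\sp$ with the $c_{\e Y_i}$ is the same condition produced by the Proposition in the first step, since a mismatch of conventions there is exactly where the two equivalences could fail to glue.
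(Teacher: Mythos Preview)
Your proof is correct and follows essentially the same route as the paper: both chain the Proposition (applied to the $Y_i^{\prime}$ with their canonical $\g$-actions, whose associated descent data are the $c_{\e Y_i}$ by Example~\ref{tact}) with the descent-of-morphisms theorem for faithfully flat quasi-compact base change. The only cosmetic difference is that the paper cites \cite[Theorem 5.2]{sga1} for the second equivalence whereas you cite \cite[\S6.1]{blr}; otherwise your argument and the paper's are the same.
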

\begin{proof} By \cite[Theorem 5.2 and comment after the statement]{sga1}, $\varepsilon=\psi\times_{S}\sp$ for some $S$-morphism $\psi\colon Y_{1}\to Y_{2}$, if, and only if, the diagram (which is an instance of \eqref{poq})
\begin{equation}\label{ppf2}
\xymatrix@1@R=30pt@C=40pt{
p_{\le 1}^{*}Y_{1}^{\le\prime}\e\ar[r]^{c_{\e Y_{\lbe 1}}}_{\sim} \ar[d]_(.55){p_{\le 1}^{*}(\varepsilon)}& p_{\le 2}^{*}Y_{1}^{\le\prime} \ar[d]^{p_{\le 2}^{*}(\varepsilon)}\\
p_{\le 1}^{*}Y_{2}^{\le\prime}\,\ar[r]^{c_{\e Y_{\lbe 2}}}_{\sim} & p_{\le 2}^{*}Y_{2}^{\le\prime}
}
\end{equation}
commutes (see the next remark). By the proposition, the latter is the case if, and only if, $\varepsilon$ is invariant under the action of $\g$. 	
\end{proof}

\begin{remark} In \cite[Theorem 5.2 and comment after the statement]{sga1}, the schemes $p_{\lle 1}^{*}Y_{i}^{\le\prime}$ and $p_{\lle 2}^{*}Y_{i}^{\le\prime}$ have been identified via $c_{\e Y_{\lbe i}}$. Thus the condition in [loc.cit.] that $p_{\le 1}^{*}(\varepsilon)$ and $p_{\le 2}^{*}(\varepsilon)$ be  equal is indeed equivalent to the commutativity of diagram \eqref{ppf2}.
\end{remark}


\begin{thebibliography}{3}
\bibitem[BLR]{blr} Bosch, S., L\"utkebohmert, W. and Raynaud, M.: N\'eron models. Erg. der Math. Grenz. {\bf{21}}, Springer-Verlag,  Berlin, 1990.


\bibitem[GW]{gw} G\"ortz, U. and Wedhorn, T.: Algebraic geometry I. Schemes with examples and exercises. Advanced Lectures in Mathematics, Vieweg + Teubner, Wiesbaden, 2010.

\bibitem[SGA1]{sga1} Grothendieck, A.: Rev\^etements \'etales et groupe fondamental (SGA 1). S\'eminaire de g\'eom\'etrie alg\'ebrique du Bois Marie 1960--61. Lecture Notes in Math. {\bf{224}},Springer-Verlag 1971.



\bibitem[$\text{EGA I}_{\le\text{new}}$\e]{ega1} Grothendieck, A. and
Dieudonn\'e, J.: \'El\'ements de g\'eom\'etrie alg\'ebrique I. Le langage des
sch\'emas. Grundlehren Math. Wiss. {\bf{166}}, Springer-Verlag, Berlin, 1971.


\bibitem[J]{j} Jahnel, J.: The Brauer-Severi variety associated with a central simple algebra (unpublished). Available at \url{https://www.math.uni-bielefeld.de/lag/man/052.pdf}



\end{thebibliography}
\end{document}